\numberwithin{equation}{section}
\newtheorem{thm}{Theorem}[section]
\newtheorem{pro}[thm]{Proposition}
\newtheorem{cor}[thm]{Corollary}
\begin{document}


\title[On a  theorem of Arvanitakis]
{On a  theorem of Arvanitakis}

\author{Vesko  Valov}
\address{Department of Computer Science and Mathematics, Nipissing University,
100 College Drive, P.O. Box 5002, North Bay, ON, P1B 8L7, Canada}
\email{veskov@nipissingu.ca}
\thanks{Research supported in part by NSERC Grant 261914-08}
\keywords{averaging operators, function spaces, continuous selections, locally convex spaces, probability measures}

\subjclass{Primary 54C60, 46E40; Secondary 28B20}


\begin{abstract}
Arvanitakis \cite{ar} established recently a theorem which is a common generalization
of Michael's convex selection theorem \cite{m} and Dugundji's extension theorem \cite{d}.
In this note we provide a short proof of a more general version of Arvanitakis' result.
\end{abstract}

\maketitle

\markboth{}{Arvanitakis' theorem}



\section{Introduction}

Arvanitakis \cite{ar} established recently the following result
extending both Michael's convex selection theorem \cite{m} and
Dugundji's simultaneous extension theorem \cite{d}:

\begin{thm}\cite{ar}
Let $X$ be a space with property $c$, $Y$ a complete metric space
and $\Phi\colon X\to 2^Y$ a lower semi-continuous set-valued map with
non-empty values. Then for every locally convex complete linear
space $E$ there exists a linear operator $S\colon C(Y,E)\to C(X,E)$
such that
$$S(f)(x)\in\overline{\mathrm{conv}}f(\Phi(x))\hbox{~}\mbox{for all}\hbox{~}x\in X\hbox{~}\mbox{and}\hbox{~}f\in C(Y,E).\leqno{(1)}$$
Furthermore, $S$ is continuous when both $C(Y,E)$ and $C(X,E)$ are
equipped with the uniform topology or the topology of uniform
convergence on compact sets.
\end{thm}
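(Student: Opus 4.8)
The plan is to realize $S$ as an \emph{averaging} (barycenter) operator. I would first produce a map $x\mapsto\mu_x$ from $X$ into the space $P(Y)$ of Radon probability measures on $Y$, continuous for the weak (narrow) topology generated by the functionals $\mu\mapsto\int_Y g\,d\mu$ ($g\in C_b(Y)$), such that $\mathrm{supp}\,\mu_x\subseteq\overline{\Phi(x)}$ and, in addition, the family $\{\mu_x:x\in K\}$ is uniformly tight for every compact $K\subseteq X$. Granting such a map, I would simply set
\[ S(f)(x)=\int_Y f\,d\mu_x, \]
the barycenter of the pushforward $f_*\mu_x$. Since $E$ is complete and $\mu_x$ is tight, this vector integral exists in $E$; linearity of $S$ is immediate from linearity of the integral, and continuity of each $S(f)$ on $X$ follows from the continuity of $x\mapsto\mu_x$ together with that of $f$.

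Property $(1)$ is then essentially automatic: the barycenter of a probability measure concentrated on a set $K$ lies in $\overline{\conv}f(K)$, so $S(f)(x)\in\overline{\conv}f(\overline{\Phi(x)})$; by continuity of $f$ one has $f(\overline{\Phi(x)})\subseteq\overline{f(\Phi(x))}$, whence $\overline{\conv}f(\overline{\Phi(x)})=\overline{\conv}f(\Phi(x))$, which is exactly $(1)$. For the two continuity clauses I would use the seminorm estimate $p(S(h)(x))\le\int_Y p(h)\,d\mu_x\le\sup_{y\in Y}p(h(y))$, valid for every continuous seminorm $p$ on $E$ and every $h\in C(Y,E)$. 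This shows $S$ is non-expansive with respect to the seminorms defining the uniform topology, hence continuous there. For the topology of uniform convergence on compact sets, given compact $K\subseteq X$ I would invoke the uniform tightness of $\{\mu_x:x\in K\}$: up to arbitrarily small mass the measures are supported in a fixed compact $L\subseteq Y$, so the same estimate restricted to $L$ gives $\sup_{x\in K}p(S(h)(x))\le\sup_{y\in L}p(h(y))+\varepsilon$, yielding continuity in that topology as well.

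The crux — and the step where property $c$ enters, playing the role that paracompactness plays in Michael's theorem — is the construction of the continuous measure-valued map $x\mapsto\mu_x$. Using the metric of $Y$ I would fix for each $n$ a family of open sets $V_s$ of diameter $<2^{-n}$ covering $\Phi$ in the relevant sense, and set $U_s=\{x:\Phi(x)\cap V_s\neq\emptyset\}$, which is open by lower semicontinuity. Property $c$ supplies a suitable locally finite partition of unity $\{p_s\}$ subordinate to $\{U_s\}$; choosing a point $y_s\in V_s$ (note $y_s$ need \emph{not} lie in any fixed $\Phi(x)$, which is what makes the choice possible) I form the finitely supported measures $\mu^{(n)}_x=\sum_s p_s(x)\,\delta_{y_s}$. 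Whenever $p_s(x)>0$ we have $x\in U_s$, so $\diam V_s<2^{-n}$ forces $y_s$ within $2^{-n}$ of $\Phi(x)$; thus $\mu^{(n)}_x$ is $2^{-n}$-concentrated near $\Phi(x)$ and depends continuously on $x$. Refining the covers compatibly from level $n$ to $n+1$, I would make $(\mu^{(n)}_x)_n$ a Cauchy sequence in a complete uniform structure on $P(Y)$ (e.g. the Kantorovich–Rubinstein distance on each separable piece), and take $\mu_x$ to be its limit; completeness of $Y$ guarantees that $\mu_x$ is a genuine tight Radon measure with $\mathrm{supp}\,\mu_x\subseteq\overline{\Phi(x)}$. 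I expect the two real obstacles to be exactly here: organizing the choices $y_s$ and the successive partitions coherently so that the approximating measures converge (this is what property $c$ must deliver, and it is where Dugundji's and Michael's constructions are unified), and extracting from the local finiteness of the $\{p_s\}$ the uniform tightness of $\{\mu_x:x\in K\}$ over compacta that the compact-open continuity clause requires.
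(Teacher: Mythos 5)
Your strategy---construct a continuous measure-valued map $x\mapsto\mu_x$ with $\mathrm{supp}\,\mu_x\subseteq\overline{\Phi(x)}$ and let $S(f)(x)$ be the barycenter of the pushforward of $\mu_x$ under $f$---is the direct route, close in spirit to Arvanitakis' original argument, and genuinely different from the paper's proof: the paper instead takes a perfect Milyutin surjection $f\colon X_0\to X$ from a zero-dimensional paracompact space admitting an averaging operator (Repov\v{s}--Semenov--Shchepin), applies Michael's \emph{zero-dimensional} selection theorem to $z\mapsto\overline{\Phi(f(z))}$ to get a point-valued selection $\theta\colon X_0\to Y$, and transports functions back via Banakh's barycenter machinery for compactly supported measures. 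However, your proposal has a genuine gap at exactly the step you flag: ``refining the covers compatibly'' does not make $(\mu^{(n)}_x)_n$ Cauchy. The level-$(n+1)$ partition of unity is subordinate to the sets $U_t=\{x:\Phi(x)\cap V_t\neq\varnothing\}$, and nothing relates the weights $p^{(n+1)}_t(x)$ to the weights $p^{(n)}_s(x)$, even when $\{V_t\}$ refines $\{V_s\}$. Concretely, take $\Phi(x)=Y$ for all $x$ (trivially lower semi-continuous): then every $U_s$ equals $X$, any partition of unity is admissible at every level, and one can force $\mu^{(n)}_x$ to concentrate near points far apart for consecutive $n$, so nothing converges. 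The repair is a Michael-type iteration in which level $n+1$ is built \emph{from} level $n$: intersect $\Phi(x)$ with small balls around the current atoms, verify the localized map is still lower semi-continuous with nonempty values, and redistribute each atom's mass within its ball so that mass moves at most $2^{-n}$ per step. That argument is the actual content of the theorem, and it is missing.

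A second, related gap: tightness of $\mu_x$ is not enough either for $S$ to be defined or for the continuity clauses. For unbounded $f\in C(Y,E)$ the integral $\int_Y f\,d\mu_x$ can diverge for a merely tight Radon probability measure (take $Y=E=\mathbb R$, $f=\mathrm{id}$, and $\mu_x$ with infinite first moment), and your compact-open estimate $\sup_{x\in K}p(S(h)(x))\le\sup_{y\in L}p(h(y))+\varepsilon$ is false, because the $\varepsilon$ of mass outside $L$ meets values of $h$ that are unbounded there. What is needed is that each $\mu_x$ have \emph{compact} support, with $\bigcup_{x\in K}\mathrm{supp}\,\mu_x$ relatively compact for every compact $K\subseteq X$; this is precisely what the paper's $P_\beta$-formalism together with perfectness of the Milyutin map delivers. (If the iteration above is done with mass moving at most $2^{-n}$ per step, compact supports do follow, since the limit support is totally bounded and closed---but this must be proved, not inferred from tightness.) Finally, note that you use only paracompactness of $X$; the $k$-space half of property $c$ never appears in your argument, yet it is needed exactly when passing from bounded to unbounded functions: with supports uniformly compact only over compacta, one gets continuity of $S(f)$ on compact subsets of $X$, hence on $X$ itself only because $X$ is a $k$-space. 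Indeed, the paper's Theorem 1.2 shows that for a general paracompact $X$ only the bounded-function statement survives, so any proof of the full statement that never invokes the $k$-space property cannot be correct as written.
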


Here, $C(X,E)$ is the set of all continuous maps from $X$ into $E$ (if $E$ is the real line, we write $C(X)$).
We also denote by $C_b(X,E)$ the bounded functions from $C(X,E)$.
Recall that a set-valued map $\Phi\colon X\to 2^Y$ is lower semi-continuous
if the set $\{x\in X:\Phi(x)\cap U\neq\varnothing\}$ is open in $X$ for any open $U\subset Y$.
A space $X$ is said to have property $c$ \cite{ar} if $X$ is paracompact and,
for any space $Y$ and a map $\phi\colon X\to Y$, $\phi$ is
continuous if and only if it is continuous on every compact subspace
of $X$. It is easily seen that the last condition is equivalent to
$X$ being a $k$-space (i.e., the topology of $X$ is determined by
its compact subsets, see \cite{en}).

We provide a short proof of Theorem
1.1. Here is our slightly more general version of Theorem 1.1.

\begin{thm}
Let $X$ be a paracompact space, $Y$ a complete metric space and
$\Phi\colon X\to 2^Y$ a lower semi-continuous set-valued map with
non-empty values. Then: 
\begin{itemize}
\item[(i)] For every locally convex complete linear
space $E$ there exists a linear operator $S_b\colon C_b(Y,E)\to
C_b(X,E)$ satisfying condition $(1)$ such that $S_b$ is continuous
with respect to the uniform topology and the topology of uniform
convergence on compact sets;
\item[(ii)] If $X$ is a $k$-space or $E$ is a Banach space,
$S_b$ can be continuously extended $($with respect to both types of
topologies$)$ to a linear operator $S\colon C(Y,E)\to C(X,E)$
satisfying $(1)$.
\end{itemize}
\end{thm}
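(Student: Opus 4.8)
The plan is to avoid choosing values of $f$ pointwise and instead to produce, once and for all, a weak$^*$-continuous map $\beta\colon X\to P(Y)$ into the probability measures on $Y$ with $\beta(x)$ concentrated on $\overline{\Phi(x)}$, and then set $S_b(f)(x)=\int_Y f\,d\beta(x)$. Linearity in $f$ is then automatic, and since $E$ is a complete locally convex space the barycenter $\int_Y f\,d\beta(x)$ exists and lies in $\overline{\conv}f\big(\mathrm{supp}\,\beta(x)\big)\subseteq\overline{\conv}f(\Phi(x))$, which is exactly condition $(1)$. The continuity of $S_b$ for both topologies will then come for free from the estimate $p\big(S_b(f)(x)-S_b(g)(x)\big)\le\int_Y p\big(f-g\big)\,d\beta(x)\le\sup_{y}p\big(f(y)-g(y)\big)$, valid for every continuous seminorm $p$ on $E$, which bounds the uniform seminorms and, restricted to a compact $K\subseteq X$, the compact-open ones; so the whole problem is shifted onto the construction of $\beta$.

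To build $\beta$, first replace the metric on $Y$ by a complete bounded one $d\le 1$. For each $n$ I choose a locally finite open cover $\mathcal U_n=\{U^n_s\}_{s\in S_n}$ of $Y$ by sets of diameter $<2^{-n}$, with $\mathcal U_{n+1}$ refining $\mathcal U_n$; by lower semicontinuity each $O^n_s=\{x:\Phi(x)\cap U^n_s\neq\varnothing\}$ is open, and since $\Phi$ has nonempty values $\{O^n_s\}_s$ covers $X$. Paracompactness of $X$ yields a locally finite partition of unity $\{\lambda^n_s\}$ subordinate to $\{O^n_s\}$; picking $y^n_s\in U^n_s$ I set $\beta_n(x)=\sum_s\lambda^n_s(x)\,\delta_{y^n_s}$. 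Whenever $\lambda^n_s(x)>0$ one has $x\in O^n_s$, so $y^n_s$ lies within $2^{-n}$ of $\Phi(x)$; thus each $\beta_n(x)$ is a finitely supported probability measure whose support is $2^{-n}$-close to $\Phi(x)$. Carrying out the choice of $\mathcal U_{n+1}$, of the sets $O^{n+1}_s$, and of the partition $\{\lambda^{n+1}_s\}$ coherently with level $n$, I arrange that $\{\beta_n(x)\}_n$ is uniformly Cauchy in the Kantorovich--Rubinstein metric on $P(Y)$; by completeness it converges to a measure $\beta(x)$ depending continuously on $x$, and because the supports are $2^{-n}$-close to $\Phi(x)$ and $Y$ is complete, $\beta(x)$ is concentrated on $\overline{\Phi(x)}$. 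This coupling of consecutive partitions of unity, so as to force uniform Cauchyness while keeping the supports pinned to $\Phi$, is the heart of the matter and the main obstacle.

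With $\beta$ in hand, part (i) follows as explained, $S_b$ sending bounded functions to bounded functions since $\beta(x)$ is a probability measure. For part (ii), the compact-open case is handled by density and completeness: $C_b(Y,E)$ is dense in $C(Y,E)$ in the compact-open topology (multiply $f$ by a continuous cut-off equal to $1$ on a prescribed compact set and with compact support), while $S_b$ is uniformly continuous for the compact-open seminorms by the estimate above; when $X$ is a $k$-space the space $(C(X,E),\tau_{co})$ is complete, so $S_b$ extends uniquely to a continuous linear $S\colon C(Y,E)\to C(X,E)$, and a limiting argument shows that $(1)$ is preserved. For the uniform topology, and in the Banach case, I instead keep the explicit formula $S(f)(x)=\int_Y f\,d\beta(x)$: the nonexpansive estimate already gives continuity as soon as the integral is defined, so the remaining — and in this case delicate — point is to verify that $\int_Y f\,d\beta(x)\in E$ exists and is continuous in $x$ for every, possibly unbounded, $f\in C(Y,E)$, which I establish using completeness of $E$ together with the control on the tails of $\beta(x)$ built into the construction. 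Finally I check that the extended $S$ restricts to $S_b$ on $C_b(Y,E)$ and satisfies $(1)$.
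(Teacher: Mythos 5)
Your plan is not the paper's: the paper never constructs a measure-valued map on $X$ directly. It factors the problem through a zero-dimensional Milyutin space, using the Repov\v{s}--Semenov--Shchepin theorem to get a perfect surjection $f\colon X_0\to X$ from a zero-dimensional paracompact space admitting an averaging operator, then applies Michael's zero-dimensional selection theorem to $z\mapsto\overline{\Phi(f(z))}$ to get a selection $\theta\colon X_0\to Y$, and finally sets $S_b(h)=T_b(h\circ\theta)$, where $T_b$ is built from the averaging operator via Banakh's barycenter machinery. Your route --- iterating partitions of unity to produce a weak$^*$-continuous $\beta\colon X\to P(Y)$ concentrated on $\overline{\Phi(x)}$ and taking barycenters --- is essentially Arvanitakis' original argument, the very proof this note was written to shortcut. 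That would be a legitimate alternative, but as written it has a genuine gap exactly at its center: you assert that the covers $\mathcal U_{n+1}$ and partitions $\{\lambda^{n+1}_s\}$ can be chosen ``coherently'' so that $\{\beta_n(x)\}_n$ is uniformly Cauchy in the Kantorovich--Rubinstein metric, but you never say how. Naive refinement does not do it: if $\lambda^n_s(x)>0$ and $\lambda^{n+1}_t(x)>0$, the atoms $y^n_s$ and $y^{n+1}_t$ are each within $2^{-n}$ of the \emph{set} $\Phi(x)$, but $\Phi(x)$ can have large diameter, so the two atoms need not be close to each other and nothing forces $d_{KR}(\beta_n(x),\beta_{n+1}(x))$ to be small. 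Fixing this requires an induction in the style of Michael's proof --- at stage $n+1$ one must work with the l.s.c.\ maps $x\mapsto\Phi(x)\cap B(y^n_s,2^{-n})$ and split each mass $\lambda^n_s(x)$ among atoms lying in the corresponding ball --- and carrying that out (including the verification that lower semicontinuity survives the intersection with open balls, uniformly over the locally finite family) is precisely the hard content you have deferred. Flagging it as ``the heart of the matter and the main obstacle'' is accurate, but it means the proof is not there.

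Part (ii) has two further defects. First, your density argument for the compact-open case relies on a continuous cut-off equal to $1$ on a compact $K\subset Y$ with compact support; such a function does not exist in general (take $Y$ an infinite-dimensional Banach space: no nonempty open subset has compact closure, so no continuous function with compact support can equal $1$ anywhere), so the claimed density of $C_b(Y,E)$ in $(C(Y,E),\tau_{co})$ is unproved by this device. Second, in the Banach/uniform case you keep the formula $S(f)(x)=\int_Y f\,d\beta(x)$ for unbounded $f$, but the tail control you get from KR-convergence is only a mass bound of order $2^{-n}$ outside finite unions of balls, and that cannot compete with arbitrary growth of $f$: if $\mathrm{supp}\,\beta(x)$ is unbounded (which your construction does not rule out), one can choose continuous $f$ with $\int \|f\|\,d\beta(x)=\infty$, so the integral simply does not exist. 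The paper avoids both issues because every measure it ever integrates against has \emph{compact} support: the averaging measures $g(y)$ sit on the fibers $f^{-1}(y)$ of a perfect map, so $S(h)(x)$ is a barycenter over the compact set $\theta(f^{-1}(x))\subset\overline{\Phi(x)}$, which exists for every continuous $h$ once $E$ is complete; the Banach-case continuity is then obtained from a bounded closed neighborhood of that compact image, not from a tail estimate. If you want to salvage your approach, you must build the coherence so that the supports $\mathrm{supp}\,\beta_n(x)$ form a finitely-branching tree with mesh $\to 0$, forcing $\mathrm{supp}\,\beta(x)$ to be compact; that would simultaneously repair the Cauchyness step and part (ii), but none of it is in the present text.
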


Our proof of Theorem 1.2 is based on the idea from a result of Repov\v{s}, P.Semenov and E.Shchepin \cite{rss}
that Michael's zero-dimensional selection theorem yields the convex-valued selection theorem. 

The author would like to express his
gratitude to M. Choban for several valuable suggestions.


\section{Proof of Theorem 1.2}

Let $E$ be a locally convex linear space. We denote by $E^*$ the set
of all continuous linear functionals on $E$ with the topology of
uniform convergence on the weakly bounded subsets of $E$. The second
dual $E^{**}$ is the space of continuous functionals on $E^*$ with
the topology of uniform convergence on the equicontinuous subsets of
$E^*$. It is well known that the canonical map $E\to E^{**}$ is an
embedding, see \cite{Sc}.

We need Banakh's technique \cite{b2} concerning barycenters of some
probability measures. First of all, for every compact space $X$ let
$P(X)$ be the space of all regular probability measures on $X$
endowed with the $w^*$-topology. Each $\mu\in P(X)$ can also be
considered as a continuous linear positive functional on $C(X)$ (the
continuous real-valued functions on $X$ with the uniform convergence
topology) with $\mu(1_X)=1$, where $1_X$ is the constant function on
$X$ having a value one. Recall that for any $\mu\in P(X)$ there
exists a closed nonempty set $\mathrm{supp}(\mu)\subset X$ such that
$\mu(g)=\mu(f)$ for any $f,g\in C(X)$ with
$f|\mathrm{supp}(\mu)=g|\mathrm{supp}(\mu)$, and
$\mathrm{supp}(\mu)$ is the smallest closed subset of $X$ with this
property. If $X$ is a Tychonoff space, we consider the following
subsets of $P(\beta X)$, where $\beta X$ is the \v{C}ech-Stone
compactification of $X$:
$$P_\beta(X)=\{\mu\in P(\beta X):\mathrm{supp}(\mu)\subset X\}$$ and
$$\hat{P}(X)=\{\mu\in P(\beta X): \mu_*(X)=1\}.$$ Here
$\mu_*(X)=\sup\{\mu(B): B\subset X\hbox{~}\mbox{is a Borel subset
of}\hbox{~}\beta X\}$. Every map $h\colon M\to E$ generates a map
$P_\beta(h)\colon P_\beta(M)\to P_\beta(E)$ defined by
$P_\beta(h)(\mu)(\phi)=\mu(\phi\circ h)$, where $\mu\in P_\beta(M)$
and $\phi\in C_b(E)$. In particular, if $i_M\colon M\hookrightarrow
E$ is the inclusion of $M$ into $E$, then  $P_\beta(i_M)$ is
one-to-one and $P_\beta(\delta_x)=\delta_x$ for all $x\in M$
($\delta_x$ is the Dirac measure at the point $x$). The functors
$\hat{P}$ and $P_\beta$ were introduced in \cite{b1} and \cite{ch},
respectively.

Banakh \cite{b2} defined barycenters of measures from $\hat{P}(M)$,
where $M$ is a weakly bounded subset of some locally convex linear
space $E$. For any such $M\subset E$ there exists an affine map
(called a {\em barycenter map}) $b_M\colon\hat{P}(M)\to E^{**}$
which is continuous only when $M$ is bounded in $E$, see
\cite[Theorem 3.2]{b2}. A convex subset $M\subset E$ is called {\em
barycentric} if $b_M(\hat P(M))\subset M$. It was established in
\cite[Proposition 3.10]{b2} that any complete bounded convex subset
of $E$ is barycentric. Since for any $M$ we have
$P_\beta(M)\subset\hat{P}(M)$, we can apply the Banakh arguments
with $\hat{P}(M)$ replaced by $P_\beta(M)$, and this is done in the following
proposition.

\begin{pro}
Let $E$ be a complete locally convex linear space. Then there exists
a not necessarily continuous affine map $b_E\colon P_\beta(E)\to E$
such that $b_E(\mu)\in\overline{\mathrm{conv}}(\mathrm{supp(\mu)})$
for every $\mu\in P_\beta(E)$. Moreover, if $M\subset E$ is a
bounded set then the map $b_E\circ P_\beta(i_M)\colon P_\beta(M)\to
E$ is continuous.
\end{pro}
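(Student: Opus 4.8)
\section*{Proof proposal}

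My plan is to reduce the construction of $b_E$ on all of $P_\beta(E)$ to Banakh's barycenter maps on complete bounded convex sets, where everything is already available. The starting observation is that for any $\mu\in P_\beta(E)$ the support $K_\mu:=\mathrm{supp}(\mu)$ is a closed subset of the compact space $\beta E$, hence compact, and by definition of $P_\beta(E)$ it lies in $E$; being compact it is bounded. Therefore $N_\mu:=\overline{\conv}(K_\mu)$ is convex, bounded (the closed convex hull of a bounded set is bounded), and complete (it is closed in the complete space $E$). By \cite[Proposition 3.10]{b2} such an $N_\mu$ is barycentric, so Banakh's map $b_{N_\mu}$ sends $\hat P(N_\mu)$, and in particular $P_\beta(N_\mu)$, into $N_\mu\subset E$.

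Since the compact set $K_\mu$ is contained in $N_\mu$, the measure $\mu$ lies in the image of $P_\beta(i_{N_\mu})\colon P_\beta(N_\mu)\to P_\beta(E)$, say $\mu=P_\beta(i_{N_\mu})(\nu_\mu)$, and I would define
$$b_E(\mu):=b_{N_\mu}(\nu_\mu)\in N_\mu=\overline{\conv}(\mathrm{supp}(\mu)).$$
This gives the required inclusion immediately. The crucial point is well-definedness, and here I would use that the element $b_N(\nu)\in E^{**}$ attached to a barycentric $N$ and $\nu\in P_\beta(N)$ is governed by the intrinsic formula $b_N(\nu)(\phi)=\nu(\phi|_N)$ for $\phi\in E^*$ (note $\phi|_N\in C_b(N)$, as a continuous linear functional is bounded on the bounded set $N$). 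Since $\nu(\phi|_N)$ is just the integral of $\phi$ against the underlying measure, its value does not depend on the barycentric set used to compute it. Hence if $\mathrm{supp}(\mu)$ is contained in two barycentric sets, the two barycenters agree, and $b_E(\mu)$ is unambiguous.

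Affineness follows by the same device. Given $\mu_0,\mu_1\in P_\beta(E)$ and $t\in[0,1]$, the support of $t\mu_0+(1-t)\mu_1$ is contained in $K_{\mu_0}\cup K_{\mu_1}$, so all three measures are carried by the single barycentric set $N:=\overline{\conv}(K_{\mu_0}\cup K_{\mu_1})$; since $b_N$ is affine, well-definedness gives $b_E(t\mu_0+(1-t)\mu_1)=t\,b_E(\mu_0)+(1-t)\,b_E(\mu_1)$. For the continuity assertion, let $M\subset E$ be bounded and put $N:=\overline{\conv}(M)$, again complete, bounded and convex, hence barycentric. Every measure in the image of $P_\beta(i_M)$ has support in $M\subset N$, so by well-definedness $b_E\circ P_\beta(i_M)$ coincides with $b_N$ restricted to $P_\beta(N)$, precomposed with the inclusion $P_\beta(M)\hookrightarrow P_\beta(N)$. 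Because $N$ is bounded, \cite[Theorem 3.2]{b2} guarantees that $b_N$ is continuous, and continuity of the composite follows.

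The step I expect to be the main obstacle is the well-definedness/consistency argument: one must verify carefully that Banakh's $b_N$ is genuinely given by the intrinsic pairing $\phi\mapsto\nu(\phi|_N)$ independently of $N$, and that the passage from $\mu\in P_\beta(E)$ with compact support in $N$ to the corresponding $\nu_\mu\in P_\beta(N)$ is compatible with the functor $P_\beta$. Once this identification is established, the inclusion $b_E(\mu)\in\overline{\conv}(\mathrm{supp}(\mu))$, affineness, and the continuity on bounded sets are all inherited directly from Banakh's results.
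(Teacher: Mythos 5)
Your proposal is correct and follows essentially the same route as the paper: both arguments rest on Banakh's Proposition 3.10 (complete bounded convex subsets are barycentric) applied to $\overline{\mathrm{conv}}(\mathrm{supp}(\mu))$, and on his Theorem 3.2 for continuity over bounded sets. The only difference is organizational: the paper defines $b_E(\mu)$ outright by the intrinsic formula $b_E(\mu)(l)=\mu(l|\mathrm{supp}(\mu))$, $l\in E^*$ (so well-definedness and affineness are automatic, and what remains is to check that this functional lies in $E^{**}$ and then, via Proposition 3.10, in $\overline{\mathrm{conv}}(\mathrm{supp}(\mu))$), whereas you define $b_E(\mu)$ through a choice of barycentric set and recover exactly that formula inside your consistency argument.
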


\begin{proof}
We follow the arguments from \cite{b2}. For every $\mu\in
P_\beta(E)$ we consider the functional $b_E(\mu)\colon E^*\to\mathbb
R$, defined by $b_E(\mu)(l)=\mu(l|\mathrm{supp}(\mu))$, $l\in E^*$.

\textit{Claim. $b_E(\mu)$ is continuous for all $\mu\in
P_\beta(E)$.}

Indeed, suppose $\{l_\alpha\}\subset E^*$ is a net in $E^*$
converging to some $l_0\in E^*$. This means that $\{l_\alpha\}$ is
uniformly convergent to $l_0$ on every weakly bounded subset of $E$.
In particular, $\{l_\alpha\}$ is uniformly convergent to $l_0$ on
$\mathrm{supp}(\mu)$. Consequently, $\{\mu(l_\alpha)\}$ converges to
$\mu(l_0)$.

Therefore, $b_E(\mu)\in E^{**}$ for any $\mu\in P_\beta(E)$. On the
other hand, since $\mathrm{supp}(\mu)\subset E$ is compact and $E$
is complete, $C(\mu)=\overline{\mathrm{conv}}(\mathrm{supp}(\mu))$ is
a compact convex subset of $E$. Then, according to \cite[Proposition
3.10]{b2}, $C(\mu)$ is barycentric and contains $b_E(\mu)$.  So,
$b_E$ maps $P_\beta(E)$ into $E$. The second half of Proposition 2.1
follows from the fact that $E$ is embedded in $E^{**}$ and Theorem
3.2 from \cite{b2}, which (in our situation) states that the map
$b_E\circ P_\beta(i_M)\colon P_\beta(M)\to E^{**}$ is continuous
provided $M$ is bounded in $E$.
\end{proof}

The theory of maps between compact spaces admitting averaging
operators was developed by Pelczy\'{n}ski \cite{p}. For noncompact
spaces we use the following definition \cite{v1}: a surjective
continuous map $f\colon X\to Y$ admits an averaging operator with
compact supports if there exists an embedding $g\colon Y\to
P_\beta(X)$ such that $\mathrm{supp}(g(y))\subset f^{-1}(y)$ for all
$y\in Y$. Then the regular linear operator $u\colon C_b(X)\to
C_b(Y)$, defined by $$u(h)(y)=g(y)(h),\hbox{~}h\in C_b(X),\hbox{~}
y\in Y\leqno{(2)}$$ satisfies $u(\phi\circ f)=\phi$ for any $\phi\in
C_b(Y)$. Such an operator $u$ is called {\em averaging for $f$}.

\begin{pro}
Let $f\colon X\to Y$ be a perfect map admitting an averaging
operator with compact supports and $E$ a complete locally convex
linear space. Then there exists a linear operator $T_b\colon
C_b(X,E)\to C_b(Y,E)$ such that:
\begin{itemize}
\item[(i)] $T_b(h)(y)\in\overline{\mathrm{conv}}\big(h(f^{-1}(y))\big)$ for all
$y\in Y$ and $h\in C_b(X,E)$;
\item[(ii)] $T_b(\phi\circ f)=\phi$ for any $\phi\in C_b(Y,E)$;
\item[(iii)] $T_b$ is continuous when both $C_b(X,E)$ and $C_b(Y,E)$
are equipped with the uniform topology or the topology of uniform
convergence on compact sets.
\end{itemize}
Moreover, if $Y$ is a $k$-space or $E$ is a Banach space, $T_b$ can be extended to a linear
operator $T\colon C(X,E)\to C(Y,E)$ satisfying conditions $(i) -
(iii)$ with $C_b(X,E)$ and $C_b(Y,E)$ replaced, respectively, by
$C(X,E)$ and $C(Y,E)$.
\end{pro}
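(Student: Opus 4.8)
The plan is to define, for $h\in C_b(X,E)$ and $y\in Y$, the measure $\nu^h_y=P_\beta(h)(g(y))\in P_\beta(E)$, where $g\colon Y\to P_\beta(X)$ is the embedding supplied by the averaging operator, and to set $T_b(h)(y)=b_E(\nu^h_y)$ with $b_E$ the barycenter map of Proposition 2.1. Since $\mathrm{supp}(g(y))\subset f^{-1}(y)$ is compact ($f$ being perfect) and $h$ is continuous, one checks that $\mathrm{supp}(\nu^h_y)\subset h(f^{-1}(y))$; by Proposition 2.1 the point $b_E(\nu^h_y)$ then lies in $\overline{\mathrm{conv}}(h(f^{-1}(y)))\subset E$, which is exactly $(i)$. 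Evaluating on $l\in E^*$ yields the formula $T_b(h)(y)(l)=g(y)(l\circ h)=u(l\circ h)(y)$, from which linearity of $T_b$ follows at once (from linearity of $u$ and of $l$, together with the fact that $E^*$ separates points of $E$). For $(ii)$, if $h=\phi\circ f$ with $\phi\in C_b(Y,E)$, then $h$ is constant on $f^{-1}(y)$, so $h$ agrees with $\phi(y)$ on $\mathrm{supp}(g(y))$ and $\nu^h_y=\delta_{\phi(y)}$; since $b_E(\delta_e)=e$ (its support being $\{e\}$), we get $T_b(\phi\circ f)(y)=\phi(y)$.

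Next I would verify that $T_b(h)\in C_b(Y,E)$ and deduce $(iii)$. Put $M=\overline{h(X)}$, which is bounded because $h$ is bounded, and factor $h=i_M\circ h'$; then $T_b(h)=\big(b_E\circ P_\beta(i_M)\big)\circ P_\beta(h')\circ g$. Here $g$ is continuous, $P_\beta(h')\colon P_\beta(X)\to P_\beta(M)$ is continuous (for $\phi\in C_b(M)$ the function $\phi\circ h'$ lies in $C_b(X)$), and $b_E\circ P_\beta(i_M)$ is continuous by the last assertion of Proposition 2.1 since $M$ is bounded; hence $T_b(h)$ is continuous, and it is bounded because its range sits in $\overline{\mathrm{conv}}(h(X))$. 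For $(iii)$ I combine $(i)$ with the convexity of continuous seminorms: for every continuous seminorm $q$ on $E$, $q(T_b(h)(y))\le\sup\{q(h(x)):x\in f^{-1}(y)\}$, so $\sup_{y\in Y}q(T_b(h)(y))\le\sup_{x\in X}q(h(x))$ and, for compact $K\subset Y$, $\sup_{y\in K}q(T_b(h)(y))\le\sup_{x\in f^{-1}(K)}q(h(x))$ with $f^{-1}(K)$ compact; by linearity these give continuity of $T_b$ in both the uniform topology and the topology of uniform convergence on compact sets.

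For the extension I would keep the same formula $T(h)(y)=b_E(P_\beta(h)(g(y)))$, which still makes sense for arbitrary $h\in C(X,E)$ since each $\phi\circ h$ ($\phi\in C_b(E)$) remains bounded; properties $(i)$, $(ii)$ and the seminorm estimates of $(iii)$ carry over verbatim. The only genuine difficulty — and the main obstacle — is that $T(h)\colon Y\to E$ must be shown continuous even though $h(X)$ need no longer be bounded, while $b_E$ is continuous only on bounded sets. If $E$ is a Banach space I argue locally: given $y_0$, choose $R$ with $\|h\|\le R$ on the compact fibre $f^{-1}(y_0)$; as $f$ is closed, the open set $U=\{x:\|h(x)\|<R+1\}\supset f^{-1}(y_0)$ yields a neighbourhood $V=Y\setminus f(X\setminus U)$ of $y_0$ with $f^{-1}(V)\subset U$, so for $y\in V$ the measures $\nu^h_y$ are supported in the bounded ball $\{\|e\|\le R+1\}$ and the factorization of the previous paragraph (with $\phi\in C_b(M)$ extended off $M$ by Tietze) makes $T(h)$ continuous on $V$. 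If instead $Y$ is a $k$-space, it suffices to check continuity of $T(h)$ on each compact $K\subset Y$, where $f^{-1}(K)$ is compact, $M=\overline{\mathrm{conv}}(h(f^{-1}(K)))$ is compact (hence bounded) by completeness of $E$, and the same factorization gives continuity of $T(h)|_K$. In either case $T(h)\in C(Y,E)$ and $T$ restricts to $T_b$ on $C_b(X,E)$, so forcing the boundedly continuous barycenter to act continuously along $y\mapsto g(y)$ is precisely the step resolved by the Banach norm or by the $k$-space reduction to compacta.
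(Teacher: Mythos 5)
Your proposal is correct and follows essentially the same route as the paper: the same barycentric formula $T_b(h)(y)=b_E(P_\beta(h)(g(y)))$ factored through $P_\beta$ of a bounded set to invoke Proposition 2.1, the same convex-hull/seminorm estimate for (i) and (iii), and the same two-case extension argument (reduction to compacta when $Y$ is a $k$-space; a local boundedness argument via closedness of $f$ when $E$ is Banach, which is exactly the paper's choice of a bounded closed neighborhood $V$ of $h(f^{-1}(y_0))$ and a saturated neighborhood $U$ of $y_0$ with $f^{-1}(U)\subset h^{-1}(V)$). Your explicit verification of linearity and of (ii) through the dual pairing $T_b(h)(y)(l)=u(l\circ h)(y)$ merely spells out what the paper leaves as "follows from the definition of $b_E$."
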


\begin{proof}
A similar statement to the first part was proved in
\cite[Proposition 3.1]{v1}.
We fix an embedding $g\colon Y\to P_\beta (X)$ with
$\mathrm{supp}(g(y))\subset f^{-1}(y)$, $y\in Y$. For every $h\in
C_b(X,E)$ consider the map $$T_b(h)\colon Y\to E, \hbox{~}
T_b(h)(y)=b_E(P_\beta(i_{h(X)})(\nu_y)),\leqno{(3)}$$ where
$i_{h(X)}\colon h(X)\hookrightarrow E$ is the inclusion and
$\nu_y\in P_\beta(h(X))$ is the measure $P_\beta(h)(g(y))$.
According to Proposition 2.1, $T_b(h)$ is continuous (recall that
$h(X)\subset E$ is bounded). It also follows from the definition of
the map $b_E$ that $T_b$ is linear. Since
$\mathrm{supp}(g(y))\subset f^{-1}(y)$ and
$\mathrm{supp}\big(P_\beta(i_{h(X)})(\nu_y)\big)\subset
h(f^{-1}(y))$, $y\in Y$, we have
$b_E(P_\beta(i_{h(X)})(\nu_y))\subset\overline{\mathrm{conv}}(h(f^{-1}(y)))$
(see Proposition 2.1). So, $T_b$ satisfies condition $(i)$.
Moreover, $T_b(h)$ belongs to $C_b(Y,E)$ because
$T_b(h)(y)\subset\overline{\mathrm{conv}}(h(X))$ for all $y\in Y$. It
follows directly from $(2)$ and $(3)$ that $T_b$ satisfies condition
$(ii)$. To prove $(iii)$, assume $K\subset Y$ is compact and let
$W_1=\{\phi\in C_b(Y,E):\phi(K)\subset V_1\}$, where $V_1$ is a
convex neighborhood of $0$ in $E$. Obviously, $W_1$ is a
neighborhood of the zero function in $C_b(Y,E)$. Take a convex
neighborhood $V_2$ of $0$ in $E$ with $\overline{V}_2\subset V_1$
and let $W_2=\{h\in C_b(X,E): h(H)\subset V_2\}$, $H=f^{-1}(K)$.
Since $H$ is compact (recall that $f$ is a perfect map), $W_2$ is a
neighborhood of $0$ in $C_b(X,E)$. Moreover, for all $y\in Y$ and
$h\in W_2$ we have
$T_b(h)(y)\subset\overline{\mathrm{conv}}(h(H))\subset\overline{V}_2\subset
V_1$. So, $T_b(W_2)\subset W_1)$. This provides continuity of $T_b$
with respect to the topology of uniform convergence on compact sets.
Similarly, one can show that $T_b$ is also continuous with respect to the uniform
topology.

Assume that $Y$ is a $k$-space and $h\in C(X,E)$. Then formula
$(3)$ provides a map $T(h)\colon Y\to E$ satisfying conditions $(i)$
and $(ii)$. We need to show that $T(h)$ is continuous on every
compact set $L\subset Y$. And this follows from Proposition 2.1
because the set $h(f^{-1}(L))\subset E$ is compact. So, $T(h)$ is
continuous and, obviously, $T(h)=T_b(h)$ for all $h\in C_b(X,E)$.
Continuity of $T$ follows from the same arguments we used to prove
continuity of $T_b$.

If $E$ is a Banach space, then every $T(h)$, $h\in C(Y,E)$, is continuous
without the requirement $Y$ to be a $k$-space. Indeed, we fix $y_0\in Y$ and
$h\in C(X,E)$. Let $V$ be a bounded closed neighborhood of $h(f^{-1}(y_0))$ in $E$.
Then $h^{-1}(V)$ is a neighborhood of $f^{-1}(y_0)$ and, since $f$ is a perfect map,
there exists a closed neighborhood $U$ of $y_0$ in $Y$ with $W=f^{-1}(U)\subset h^{-1}(V)$.
Then, according to Proposition 2.1, the map  $b_E\circ P_\beta(i_V)\colon P_\beta(V)\to
E$ is continuous. On the other hand $P_\beta(h)$ maps continuously $P_\beta(W)$ into $P_\beta(V)$ and
$g(U)\subset P_\beta(W)$ is homeomorphic to $U$ (recall that $g$ is an embedding of $Y$ into $P_\beta(X))$.
Hence, $T(h)$ is continuous on $U$. Because $U$ is a neighborhood of $y_0$ in $Y$, this implies
continuity of $T(h)$ at $y_0$.
\end{proof}

\textit{Proof of Theorem $1.2$.}  Suppose $X$, $Y$, $\Phi$ and $E$
satisfy the hypotheses of Theorem 1.2. By \cite{rss} (see also \cite{rs}), there exists a
zero-dimensional paracompact space $X_0$ and a perfect surjection
$f\colon X_0\to X$ admitting a regular averaging operator. By
Proposition 2.2, there exists a linear operator $T_b\colon
C_b(X_0,E)\to C_b(X,E)$ satisfying conditions $(i) - (iii)$. The map
$\tilde{\Phi}\colon X_0\to 2^Y$,
$\tilde{\Phi}(z)=\overline{\Phi(f(z))}$, is lower semi-continuous
with closed non-empty values in $Y$. So, according to the Michael's
0-dimensional selection theorem \cite{m1}, $\tilde{\Phi}$ has a
continuous selection $\theta\colon X_0\to Y$. Now, we define the
linear operator $S_b\colon C_b(Y,E)\to C_b(X,E)$ by
$S_b(h)=T_b(h\circ\theta)$, $h\in C_b(Y,E)$. Obviously,
$\theta(f^{-1}(x))\subset\overline{\Phi(x)}$ for every $x\in X$.
Then, according to $(i)$, for all $h\in C_b(Y,E)$ and $x\in X$ we
obtain
$$S_b(h)(x)=T_b(h\circ\theta)(x)\subset
\overline{\mathrm{conv}}\big((h\circ\theta)(f^{-1}(x))\big)\subset\overline{\mathrm{conv}}\big(h(\Phi(x))\big).$$
Continuity of $S_b$ follows from continuity of $T_b$ and the map
$\theta$.

If $X$ is a $k$-space or $E$ is a Banach space, the operator $T_b$ can be extended to a
linear operator $T\colon C(X_0,E)\to C(X,E)$ satisfying conditions
$(i) - (iii)$ from Proposition 2.2. Then $S\colon C(Y,E)\to C(X,E)$,
$S(h)=T(h\circ\theta)$, is the required linear operator extending
$S_b$.

\section{Remarks}

Let us show first that Theorem 1.2 implies Michael's selection theorem.
Assume $X$ is paracompact, $Y$ is a Banach space and $\Phi\colon X\to 2^Y$ a lower semi-continuous
map with closed convex values. Then, by Theorem 1.2
there exists a linear operator $S\colon C(Y,Y)\to
C(X,Y)$ satisfying condition $(1)$. Since the values of $\Phi$ are convex and closed,
condition $(1)$ yields that $S(id_Y)(x)\in\Phi(x)$ for all $x\in X$, where $id_Y$ is the identity on $Y$.
Hence, $S(id_Y)$ is a continuous selection for $\Phi$.

The original Dugundji theorem \cite{d} states that if $X$ is a metric space, $A\subset X$ its closed
subset and $E$ a locally convex linear space, then there exists a linear operator $S:C(A,E)\to C(X,E)$
such that $S(f)$ extends $f$ for any $f\in C(A,E)$. When both $E$ and $A$ are complete, Dugundji theorem can
be derived from Theorem 1.2. Indeed, let $A$ be a completely metrizable closed subset of a paracompact $k$-space $X$
and $E$ a complete locally convex linear space. Consider the set-valued map $\Phi\colon X\to 2^A$, $\Phi(x)=\{x\}$ if $x\in A$
and $\Phi(x)=A$ if $x\not\in A$. Let  $S\colon C(A,E)\to C(X,E)$ be a linear operator satisfying $(1)$. Then
$S(f)(x)=f(x)$ for all $f\in C(A,E)$ and $x\in A$. So, $S$ is an extension operator. If $X$ is not necessarily a $k$-space,
there exists an extension linear operator $S_b\colon C_b(A,E)\to C_b(X,E)$.

Heath and Lutzer \cite[Example 3.3]{hl} provided an example of a paracompact $X$ and a closed set $A\subset X$  homeomorphic to the
rational numbers such that there is no extension operator from $C(A)$ to $C(X)$. This space is the Michael's line, i.e.,
the real line with topology consisting of all sets of the form $U\cup V$, where $U$ is an open
subset of the rational numbers and $V$ is a subset of the irrational numbers. It is easily seen that this a $k$-space.
So, the assumption in the above result $A$ to be completely metrizable is essential.

The original Dugundji theorem with $E$ complete can be derived from Proposition 2.2 and the well known fact that every closed subset of a zero-dimensional
metric space $X$ is a retract of $X$, see for example \cite[Problem 4.1.G]{e}. Indeed, assume $X$ is a metric space and $A\subset X$ its closed
subset. By \cite{ch}, there exists a zero-dimensional metric space $X_0$ and a perfect surjection $f\colon X_0\to X$ admitting an
averaging operator. Let $A_0=f^{-1}(A)$
and $r\colon X_0\to A_0$ be a retraction. Define the linear operator $S\colon C(A,E)\to C(X,E)$ by $S(h)=T(h\circ f\circ r)$, where
$E$ is a complete locally convex linear space, $h\in C(A,E)$ and $T\colon C(X_0,E)\to C(X,E)$ is the operator from Proposition 2.2. It
follows from Proposition 2.2(i) that $S$ is an extension operator.

The proof of Theorem 1.2 is based on two main facts: the 0-dimensional Michael's selection theorem and the Repov\v{s}-Semenov-Shchepin result \cite{rss} that each paracompactum is a continuous image of under a perfect map admitting an averaging operator. So, the 0-dimensional Michael's selection theorem implies not only the convex-valued section theorem, but it also implies the Dugundji extension theorem. Actually we have the
following corollary from Proposition 2.2 ($\mathrm{Sel(\Phi)}$ denotes all continuous selections for $\Phi$).  

\begin{cor}
Let $f\colon X\to Y$ be a perfect map admitting an averaging
operator with compact supports and $E$ a Banach space. Suppose $\Phi\colon Y\to 2^E$
is a lower semi-continuous set-valued map  with closed convex non-empty values. Then
there exists an affine map from $\mathrm{Sel(\Phi\circ f)}$ to $\mathrm{Sel(\Phi)}$ which is continuous
when both $\mathrm{Sel(\Phi\circ f)}$ and $\mathrm{Sel(\Phi)}$
are equipped with the uniform topology or the topology of uniform
convergence on compact sets.
\end{cor}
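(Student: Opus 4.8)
The plan is to apply the operator $T$ from Proposition 2.2 directly, with no further construction needed. Since $E$ is a Banach space, the \emph{moreover}-part of Proposition 2.2 provides a linear operator $T\colon C(X,E)\to C(Y,E)$ satisfying conditions $(i)-(iii)$ \emph{without} assuming $Y$ to be a $k$-space; in particular $T$ is continuous with respect to both the uniform topology and the topology of uniform convergence on compact sets. I claim that the restriction of $T$ to $\mathrm{Sel}(\Phi\circ f)$ is the desired affine continuous map.

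First I would verify that $T$ carries $\mathrm{Sel}(\Phi\circ f)$ into $\mathrm{Sel}(\Phi)$. Let $t\in\mathrm{Sel}(\Phi\circ f)$, so that $t\colon X\to E$ is continuous and $t(x)\in\Phi(f(x))$ for every $x\in X$. Fix $y\in Y$. For each $x\in f^{-1}(y)$ we have $t(x)\in\Phi(f(x))=\Phi(y)$, whence $t(f^{-1}(y))\subset\Phi(y)$. Because $\Phi(y)$ is closed and convex, $\overline{\mathrm{conv}}\big(t(f^{-1}(y))\big)\subset\Phi(y)$, and condition $(i)$ of Proposition 2.2 then yields $T(t)(y)\in\overline{\mathrm{conv}}\big(t(f^{-1}(y))\big)\subset\Phi(y)$. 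As $y$ was arbitrary, $T(t)$ is a continuous selection for $\Phi$, i.e. $T(t)\in\mathrm{Sel}(\Phi)$.

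Next I would check that the induced map is affine and continuous. The convexity of the values of $\Phi$ shows that $\mathrm{Sel}(\Phi\circ f)$ is a convex subset of $C(X,E)$: if $t_1,t_2\in\mathrm{Sel}(\Phi\circ f)$ and $\lambda\in[0,1]$, then $\lambda t_1(x)+(1-\lambda)t_2(x)\in\Phi(f(x))$ for every $x$, so $\lambda t_1+(1-\lambda)t_2$ is again a selection. Since $T$ is linear, its restriction to this convex set is affine. Continuity with respect to either of the two topologies is inherited from the continuity of $T$ established in Proposition 2.2(iii), because $\mathrm{Sel}(\Phi\circ f)$ and $\mathrm{Sel}(\Phi)$ carry the corresponding subspace topologies.

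There is no serious obstacle here: the entire content is that the averaging operator $T$ respects the closed convex structure of the fibre values, so condition $(i)$ automatically upgrades a selection of $\Phi\circ f$ into a selection of $\Phi$. The only points demanding any attention are the convexity of $\mathrm{Sel}(\Phi\circ f)$, needed to speak of an affine map, and the use of the Banach hypothesis on $E$ so that Proposition 2.2 supplies the operator $T$ on all of $C(X,E)$ rather than merely on $C_b(X,E)$.
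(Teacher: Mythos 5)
Your proposal is correct and follows exactly the route the paper intends: the corollary is stated there as an immediate consequence of Proposition 2.2 (with no written proof), and your argument --- restricting the operator $T$ supplied by the Banach-space case of the \emph{moreover}-part, using condition $(i)$ together with closedness and convexity of the values of $\Phi$ to see that selections map to selections, and inheriting affineness and continuity from linearity and $(iii)$ --- is precisely the verification the author leaves to the reader.
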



\end{document}